\newcommand*\bigcdot{\mathpalette\bigcdot@{.5}}
\newcommand*\bigcdot@[2]{\mathbin{\vcenter{\hbox{\scalebox{#2}{$\m@th#1\bullet$}}}}}
\newtheorem{theorem}{Theorem}
\newtheorem{corollary}[theorem]{Corollary}
\theoremstyle{remark}
\def\XXint#1#2#3{{\setbox0=\hbox{$#1{#2#3}{\int}$ }
		\vcenter{\hbox{$#2#3$ }}\kern-.6\wd0}}
\newcommand{\dif}{\operatorname{d}\!}
\newcommand{\R}{\mathbb{R}}
\newcommand{\lo}{{\operatorname{loc}}}
\renewcommand{\geq}{\geqslant}
\newcommand{\curl}{\operatorname{curl}}
\renewcommand{\leq}{\leqslant}
\newcommand{\lin}{\operatorname{Lin}}
\newcommand{\dist}{\operatorname{dist}}
\author{Bogdan Rai\cb{t}\u{a}}
\begin{document}
\title[Quasiconvexity and self-improving size estimates]{Quasiconvexity and self-improving size estimates}
\begin{abstract}
    We show that M\"uller's $L\log L$ bound 
    $$
    F(Du)\geq 0,\,Du\in L^p_{\mathrm{loc}}(\mathbb{R}^n)\implies F(Du)\in L\log L_{\mathrm{loc}}(\mathbb{R}^n)
    $$
    for $F =\det$ and $p=n$ holds for quasiconcave $F$ which are homogeneous of degree $p>1$. This contrasts similar Hardy space bounds which hold only for null Lagrangians.
\end{abstract}
\maketitle
Let $\Omega\subset\R^n$ be an open set. Recall that a homeomorphism $u\in W^{1,n}_{\lo}(\Omega,\R^n)$ is \textit{quasiconformal} if there is a constant $K\geq 1$ such that
\begin{align}\label{eq:qr}
    |Du|^n\leq K\det Du\quad\text{a.e. in }\Omega,
\end{align}
where $|\,\bigcdot\,|$ denotes the operator norm of an  matrix. A far reaching result of Gehring \cite{gehring1973p,bojarski1955homeomorphic}, states that there exists  $\varepsilon_0>0$ such that $u\in W^{1,n+\varepsilon}_\lo(\Omega,\R^n)$ for all $\varepsilon\in(0,\varepsilon_0)$ and, in fact, the homeomorphism assumption is not necessary \cite{meyers1975some}.
A remarkable contribution of M\"uller \cite{muller1989surprising,Muller_90} provides a stunningly simple explanation for this improvement by proving that if the Jacobian of $u$ is non-negative, it enjoys the surprising higher integrability property $\det Du\in L\log L_\lo(\Omega)$.

Here we show that only the quasiconcavity property that the determinant enjoys implies the higher integrability, so that
the class of nonlinear functions for which this type of surprisingly improved integrability holds is substantially broader:
\begin{theorem}\label{thm:main}
    Let $F\colon\R^{m\times n}\to\R$ be quasiconcave and $p$-homogeneous for $p>1$. 
 Then 
  $$  
  \begin{rcases}
        Du\in L^p_\lo(\Omega,\R^{m\times n})\\
        F(Du)\geq 0\text{ a.e.}
    \end{rcases}\implies F(Du)\in L\log L_\lo(\Omega),
    $$
    with the estimate 
    $$
    \|F(Du)\|_{L\log L(\omega)}\leq c(\|Du\|_{L^p(\tilde\omega)},\omega,\tilde\omega),
    $$
    where $\omega\Subset\tilde\omega\Subset\Omega$ are open sets. Moreover, there exists $\varepsilon_0>0$ such that for all $\varepsilon\in(0,\varepsilon_0)$
    $$  
  \begin{rcases}
        Du\in L^p_\lo(\Omega,\R^{m\times n})\\
        F(Du)\geq c |Du|^p\text{ a.e.}
    \end{rcases}\implies Du\in L^{p+\varepsilon}_\lo(\Omega)
    $$
    with an analogous estimate.
\end{theorem}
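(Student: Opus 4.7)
The plan is to extract from quasiconcavity a Caccioppoli-type inequality with a subcritical Sobolev tail, then close the argument by standard self-improvement: Widman's hole-filling plus Gehring's lemma for part 2, and the Fefferman-Stein maximal-function characterization of $L\log L$ for part 1. Fix a cube $Q$ with $2Q\Subset\tilde\omega$ of side length $r$, pick $\eta\in\ccinfty(2Q)$ with $\eta\equiv 1$ on $Q$ and $|D\eta|\leq c/r$, set $b=(u)_{2Q}$, and put $\varphi = \eta(u-b)\in W^{1,p}_0(2Q,\R^m)$, so that $D\varphi = Du$ on $Q$. Since $p$-homogeneity forces $F(0)=0$, quasiconcavity at the origin (extended by density to Sobolev test functions via $|F|\leq C|\cdot|^p$) yields $\int_{2Q} F(D\varphi)\,dx\leq 0$. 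Splitting over $Q$ and $2Q\setminus Q$, using $F(Du)\geq 0$ on $Q$ together with the $p$-growth bound $|F(A)|\leq C|A|^p$ (continuity on the unit sphere plus homogeneity; note that quasiconcave functions are continuous), and applying Sobolev-Poincar\'e with the subcritical exponent $s=np/(n+p)<p$ (so that $s^\ast = p$) to the boundary term $|u-b|^p/r^p$, one obtains the key estimate
\[
\int_Q F(Du)\,dx \;\leq\; C\int_{2Q\setminus Q}|Du|^p\,dx \;+\; C|2Q|\Bigl(\dashint_{2Q}|Du|^s\,dx\Bigr)^{p/s}.
\]

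\textbf{Part 2.} Under the pointwise lower bound $F(Du)\geq c|Du|^p$, the left-hand side dominates $c\int_Q|Du|^p$, so the Caccioppoli becomes one for $|Du|^p$ with subcritical Sobolev tail. Widman's hole-filling trick (adding $C\int_Q|Du|^p$ to both sides and absorbing) yields $\int_Q|Du|^p\leq \theta\int_{2Q}|Du|^p + (\text{lower-order terms})$ with $\theta\in(0,1)$, and the Giaquinta-Giusti iteration lemma converts this into the reverse H\"older inequality $\dashint_Q|Du|^p\leq C(\dashint_{\lambda Q}|Du|^s)^{p/s}$ for some $\lambda>1$. Gehring's lemma then delivers $|Du|\in L^{p+\varepsilon}_\lo$.

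\textbf{Part 1 and main obstacle.} Now only $F(Du)\geq 0$ is assumed, and the left-hand side of the Caccioppoli no longer dominates $|Du|^p$, so hole-filling is not available. I would instead invoke Stein's theorem: on a bounded $\omega\Subset\tilde\omega$, $F(Du)\in L\log L(\omega)$ iff the Hardy-Littlewood maximal function $M(F(Du))$ lies in $L^1(\omega)$. Taking the supremum of the Caccioppoli over admissible cubes containing $x$ yields the pointwise bound
\[
M(F(Du))(x) \;\leq\; C\bigl[M(|Du|^s)(x)\bigr]^{p/s} \;+\; C\,M(|Du|^p)(x).
\]
The first summand is in $L^1$ by the Hardy-Littlewood theorem in $L^{p/s}$ (since $p/s>1$). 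The main obstacle is the second, tail term $M(|Du|^p)$, which is only in $L^{1,\infty}$ because $|Du|^p$ is merely assumed in $L^1$. To dispose of it I would first attempt a sharper choice of test function---for instance a Lipschitz truncation of $u-b$ near $\partial Q$, or a harmonic extension across the annulus---so as to eliminate the $|Du|^p$ boundary contribution from the Caccioppoli altogether, reducing the maximal bound to the first summand alone. Failing that, one can aim at a self-improving reverse H\"older inequality for $F(Du)$ directly, which would give $F(Du)\in L^{1+\varepsilon}_\lo\subset L\log L_\lo$ by Gehring. Eliminating this tail term is the heart of the argument and explains why the quasiconcave setting requires care beyond the Hardy-space route available only for null Lagrangians.
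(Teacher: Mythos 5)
Your part~2 argument (hole-filling plus Giaquinta--Giusti iteration plus Gehring) is sound, and you have correctly diagnosed that the argument for part~1 has a gap: the Caccioppoli inequality obtained by testing quasiconcavity with $\varphi=\eta(u-b)$ unavoidably carries the tail term $\int_{2Q\setminus Q}|Du|^p$, coming from the $\eta\,Du$ part of $D\varphi$ on the annulus. This term is not small---it is comparable to $|Du|^p$ there---so it cannot be absorbed without the coercivity hypothesis, and $M(|Du|^p)$ is indeed only weak-$L^1$. Neither suggested repair is convincing as stated: a Lipschitz truncation of $u-b$ still leaves the full gradient on most of the annulus, and a harmonic replacement trades $Du$ for its harmonic extension, which is again controlled only by $\int_{2Q\setminus Q}|Du|^p$. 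The reverse-H\"older route for $F(Du)$ directly faces the same tail. So the proposal does not prove the main implication.

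The idea you are missing is to modify the \emph{gradient field} rather than the \emph{map}. The paper does not cut off $u$; it cuts off $Du$, writing $\rho_R Du$ and performing a periodic Helmholtz decomposition
$\rho_R Du=DU_R+\eta_R$ on $Q_R$, with $\eta_R$ solenoidal and of zero mean. Quasiconcavity is then applied to the \emph{periodic gradient} $DU_R$, giving $\fint_{Q_R}F(DU_R)\leq F(\fint_{Q_R}DU_R)$ with no boundary contribution at all. The whole burden falls on the correction $\eta_R$, and here is the gain your construction does not have: since $\curl(\rho_R Du)=D\rho_R\wedge Du$ is $O(R^{-1}|Du|)$ in $L^\alpha$ (because $\curl Du=0$), the div-curl system for $\eta_R$ together with Sobolev embedding yields
\[
\|\eta_R\|_{L^\beta(Q_R)}\leq cR^{\,n/\beta-n/\alpha}\|Du\|_{L^\alpha(Q_R)}
\]
for $\beta>\alpha$ with $\beta^{-1}\geq\alpha^{-1}-n^{-1}$. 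In other words, the error term is in a strictly better Lebesgue space than $Du$ itself, whereas in your decomposition $D\varphi-Du=(\eta-1)Du+D\eta\otimes(u-b)$ the first summand is no better than $Du$. Choosing $1<\alpha<p<\beta$ with $\alpha(\beta-1)\geq\beta(p-1)$ and using the $p$-homogeneous Lipschitz bound for $F$ on the difference $|F(\rho_R Du)-F(DU_R)|$, all pieces close into the tail-free reverse H\"older estimate
\[
\fint_{Q_{R/2}}F(Du)\,\dif x\leq c\left(\fint_{Q_R}|Du|^\alpha\,\dif x\right)^{p/\alpha},
\]
and from there the maximal-function argument you outline (Stein's $L\log L$ characterization, boundedness of $M$ on $L^{p/\alpha}$ since $p/\alpha>1$) works exactly as you wanted, and also subsumes part~2 directly via Gehring without any hole-filling.
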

We recall that $F$ is \textit{quasiconcave} \cite{morrey1952quasi} if, for a cube $Q\subset\R^n$,
\begin{align}\label{eq:qc}
    \fint_{Q} F(D\varphi)\dif x\leq F\left(\fint_{Q} D\varphi\dif x\right)\quad\text{for }\varphi\in C^\infty(\R^n,\R^m)\text{ with $Q$-periodic $D\varphi$}.
\end{align}
This inequality is a {one-sided} condition compared with the case of the Jacobian, for which a {two-sided} condition holds. Indeed, if $F=\det$, then \eqref{eq:qc} holds with equality as the determinant is a null Lagrangian.

Theorem~\ref{thm:main} is  relevant to understanding concentration effects of weakly convergent sequences in Sobolev spaces:
\begin{corollary}
     Let $F\colon\R^{m\times n}\to\R$ be quasiconcave and $p$-homogeneous for $p>1$. If a  sequence of gradients $Du_j$  is bounded in $L^p_\lo(\Omega,\R^{m\times n})$  and satisfies $F(Du_j)\geq 0$ a.e. in $\Omega$, then $F(Du_j)$ is locally uniformly integrable in $\Omega$. If in addition $F(Du_j)\geq c|Du_j|^p$ a.e. in $\Omega$, then $|Du_j|^p$ is locally uniformly integrable in $\Omega$.
\end{corollary}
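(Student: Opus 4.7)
\medskip
\noindent\textbf{Proof proposal.} The plan is to apply Theorem~\ref{thm:main} to each $u_j$ and then convert the resulting uniform Orlicz bound into uniform integrability via de la Vallée Poussin's criterion.

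Fix open sets $\omega\Subset\tilde\omega\Subset\Omega$. Since $(Du_j)$ is bounded in $L^p_\lo(\Omega,\R^{m\times n})$, we have $M:=\sup_j\|Du_j\|_{L^p(\tilde\omega)}<\infty$. Invoking the first implication of Theorem~\ref{thm:main} term by term, the nonnegativity hypothesis $F(Du_j)\geq 0$ yields
\[
\sup_j\|F(Du_j)\|_{L\log L(\omega)}\leq c(M,\omega,\tilde\omega)<\infty.
\]
Since the Young function $\Phi(t)=t\log(e+t)$ is superlinear at infinity, this uniform Luxemburg bound implies $\sup_j\int_\omega\Phi(|F(Du_j)|)\,dx<\infty$, and de la Vallée Poussin's theorem then gives that $\{F(Du_j)\}$ is uniformly integrable on $\omega$. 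Exhausting $\Omega$ by an increasing family of such sets $\omega$ delivers local uniform integrability.

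For the second assertion, the strengthened hypothesis $F(Du_j)\geq c|Du_j|^p$ lets me apply the $L^{p+\varepsilon}$ part of Theorem~\ref{thm:main}, producing $\varepsilon_0>0$ and, for each $\varepsilon\in(0,\varepsilon_0)$, a uniform bound on $\|Du_j\|_{L^{p+\varepsilon}(\omega)}$. Consequently $\{|Du_j|^p\}$ is uniformly bounded in $L^{1+\varepsilon/p}(\omega)$, and the same criterion (now with the trivially superlinear $\Phi(t)=t^{1+\varepsilon/p}$) delivers local uniform integrability.

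No substantive obstacle arises: the analytic content is already carried by Theorem~\ref{thm:main}, and the corollary only requires that the estimates be quantitative in $\|Du\|_{L^p(\tilde\omega)}$, which they are by construction. The sole point to verify is that the constants in both implications depend on $u$ only through the quantity $\|Du\|_{L^p(\tilde\omega)}$ (plus the geometry of $\omega,\tilde\omega$), so that they remain uniform across a bounded sequence of gradients; this should be immediate from the proof of Theorem~\ref{thm:main}, but it is the one detail to double-check before the de la Vallée Poussin argument goes through as stated.
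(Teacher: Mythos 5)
Your proof is correct and is precisely the argument the paper leaves implicit (the corollary is stated without proof, being a direct consequence of Theorem~\ref{thm:main}). The one detail you flagged at the end is already settled by the theorem's statement: the estimate is explicitly of the form $\|F(Du)\|_{L\log L(\omega)}\leq c(\|Du\|_{L^p(\tilde\omega)},\omega,\tilde\omega)$, and an inspection of the proof (the final display bounds $\int_{\tilde\omega}M[F(Du)\chi_\omega]$ by $c(\omega,\tilde\omega)\int_{\tilde\omega}|Du|^p$, then invokes M\"uller's Lemma~3) shows the dependence on $\|Du\|_{L^p(\tilde\omega)}$ is monotone, so a uniform $L^p$ bound on $Du_j$ does give a uniform $L\log L$ (respectively $L^{p+\varepsilon}$) bound, and de la Vall\'ee Poussin then applies as you say. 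A small logical nitpick: the step from the uniform Luxemburg bound to $\sup_j\int_\omega\Phi(|F(Du_j)|)\,dx<\infty$ uses the $\Delta_2$-condition for $\Phi(t)=t\log(e+t)$, not its superlinearity; superlinearity is what de la Vall\'ee Poussin itself requires. Both properties hold here, so the conclusion stands.
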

The result of Theorem~\ref{thm:main} is striking when compared to similar Hardy space bounds which hold for signed quantities. It was shown in \cite{CLMS} that if $u\in W^{1,n}_\lo(\Omega,\R^n)$, then $\det Du\in \mathscr H^1_\lo(\Omega)$; moreover, the Jacobian is essentially the only nonlinearity with this property. 

The equivalence between quasiconcavity/quasiconvexity and weak convergence in Sobolev spaces is well understood \cite{kinderlehrer1994gradient,kristensen1999lower,FMP,kristensen2010characterization}, as is the connection between strong quasiconvexity and higher integrability of minimizers by Caccioppolli inequalities \cite{evans1986quasiconvexity}. Through Theorem~\ref{thm:main}, we connect quasiconvexity with quantifying the uniform integrability of nonlinear functionals applied to weakly convergent sequences. 

The purpose of this note is to present the short proof of Theorem~\ref{thm:main}  and to announce several generalizations and consequences that will be proved in \cite{Raita2025}. 

The first case of interest is the extension to nonlinearities acting on vector fields constrained by underdetermined linear pde. Let $A$ be a  linear pde operator with constant coefficients that is homogeneous of degree $k$,
\begin{align*}
    A(\partial)=\sum_{|\alpha|=k}A_\alpha\partial^\alpha,
\end{align*}
where $A_\alpha\in\lin(V,W)$ for finite dimensional normed spaces $V,\,W$.  
  We assume that $A$ has constant rank and spanning wave cone,
  \begin{align*}
      \mathrm{rank\,} A(\xi)=c\quad\text{for }\xi\in\R^n\setminus\{0\}\qquad\text{and}\qquad\,\mathrm{span\,}\{\ker A(\xi)\}_{\xi\in\R^n\setminus\{0\}}=V,
  \end{align*}
  which are standard assumptions in the investigation of weak convergence of $A$-free fields; the former is a meaningful technical limitation, while the latter is mild, see for instance \cite{kristensen2022oscillation}. An integrand $F\colon V\to \R$ is said to be \textit{$A$-quasiconcave} if  inequality \eqref{eq:qc} holds with $D\varphi$ replaced by $Q$-periodic fields $\psi\in C^\infty(\R^n,V)$ with $A\psi=0$.
\begin{theorem}
Let $F\colon V\to \R$ be $A$-quasiconcave and $p$-homogeneous for $p>1$. Let $q\geq 1$ be such that $q>{np}/(n+kp)$.
 Then 
  $$  
  \begin{rcases}
        v\in L^p_\lo(\Omega,V)\\
        A v\in L^q_{\lo}(\Omega,W)\\
        F(v)\geq 0\text{ a.e.}
    \end{rcases}\implies F(v)\in L\log L_\lo(\Omega).
    $$
    \end{theorem}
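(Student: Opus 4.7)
The strategy is to reduce to the case $Av\equiv 0$ via a Helmholtz-type decomposition, exploiting the Sobolev gain encoded in the hypothesis on $q$. Constant rank of $A$ provides a bounded projector $\mathbb{P}$ onto $A$-free fields (a zeroth-order Fourier multiplier, off the origin) and a complementary Green operator $G$ of order $-k$ with $AG=\mathrm{Id}$ on $\ima A$; writing $v=\mathbb{P}v+w$ with $w:=GAv$, Calder\'on-Zygmund and Sobolev embedding place $w\in L^{q^*}_\lo$ with $q^*=nq/(n-kq)$. The assumption $q>np/(n+kp)$ is precisely $q^*>p$, the decisive integrability gain (the supercritical regime $kq\geq n$ is handled analogously via BMO or H\"older embeddings).

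For nested cubes $Q'\Subset Q\subset\Omega$ and a cutoff $\chi\in\ccinfty(Q)$ with $\chi\equiv 1$ on $Q'$, I would test $A$-quasiconcavity against the $A$-free field $\psi:=\mathbb{P}(\chi v)$ after periodization across $Q$; $p$-homogeneity gives $F(0)=0$ and the mean of $\psi$ is controlled by $\|v\|_{L^p(Q)}$, yielding
$$
\int_Q F(\psi)\dif x\leq C\|v\|_{L^p(Q)}^p.
$$
A direct computation gives $v-\psi=w+\mathbb{P}((1-\chi)v)$, whose second summand is smooth on $Q'$ (the kernel of $\mathbb{P}$ is Calder\'on-Zygmund and $(1-\chi)v$ is supported in $Q\setminus Q'$) and bounded there by $\|v\|_{L^p(Q)}$ and the separation between $Q'$ and $\partial Q$. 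Since quasiconcavity and $p$-homogeneity make $F$ locally Lipschitz with $p$-growth,
$$
|F(v)-F(\psi)|\leq C\bigl(|v|+|\psi|\bigr)^{p-1}|v-\psi|,
$$
and H\"older with exponents $p/(p-1)$ and $q^*$ places the right-hand side in $L^r_\lo$ for some $r>1$, which embeds continuously into $L\log L_\lo$. Combined with $F(v)\geq 0$, this delivers the Caccioppoli-type inequality
$$
\int_{Q'}F(v)\dif x\leq C\bigl(\|v\|_{L^p(Q)}^p+\|w\|_{L^{q^*}(Q)}^p\bigr),
$$
and a Gehring-type self-improvement, exactly as in the proof of Theorem~\ref{thm:main}, upgrades $L^1_\lo$ to $L^{1+\varepsilon}_\lo\hookrightarrow L\log L_\lo$.

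The main obstacle is to preserve the Sobolev gain $q^*>p$ across the non-local operations. The nonlocal tail $\mathbb{P}((1-\chi)v)$ on $Q'$ must be estimated using off-diagonal decay of the multiplier kernel of $\mathbb{P}$, rather than by crude boundedness of $\mathbb{P}$ on $L^p$, otherwise the argument would lose integrability. A secondary technicality is the low-frequency indeterminacy of $\mathbb{P}$ at $\xi=0$: a standard truncation produces a smoothing remainder whose contribution is absorbed in the $\|v\|_{L^p(Q)}$ term — the only place where $p>1$ is used beyond homogeneity. Once these errors are quantified, the scheme mirrors that of Theorem~\ref{thm:main} with $\curl$ replaced by $A$ throughout.
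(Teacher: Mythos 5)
The paper announces this theorem but defers its proof to \cite{Raita2025}; only the gradient case (Theorem~\ref{thm:main}) is proved in the note, so I will compare your plan against that template.

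Your decomposition idea is the right one in spirit: replace the periodic Helmholtz decomposition $\rho_R Du = DU_R + \eta_R$ by an $A$-free projection and treat $GAv$ together with the cutoff commutator as the Sobolev-gaining remainder. The observation that $q>np/(n+kp)$ is precisely $q^*=nq/(n-kq)>p$ is also the correct reading of the hypothesis. However, there is a genuine gap in the concluding step, and it is not a detail.

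You write that the Caccioppoli inequality
$\int_{Q'}F(v)\dif x\leq C\bigl(\|v\|_{L^p(Q)}^p+\|w\|_{L^{q^*}(Q)}^p\bigr)$
feeds into a ``Gehring-type self-improvement, exactly as in the proof of Theorem~\ref{thm:main}, upgrades $L^1_\lo$ to $L^{1+\varepsilon}_\lo\hookrightarrow L\log L_\lo$''. This misreads the paper's argument and would be false as stated. The proof of Theorem~\ref{thm:main} does \emph{not} pass through Gehring for the $L\log L$ conclusion: it introduces an auxiliary exponent $1<\alpha<p<\beta$ with $\alpha(\beta-1)\geq\beta(p-1)$ and $\beta^{-1}\geq\alpha^{-1}-n^{-1}$, performs the decomposition at the $L^\alpha$ level, and arrives at the sub-$p$-power reverse H\"older estimate
$\fint_{Q_{R/2}}F(Du)\dif x\leq c\bigl(\fint_{Q_R}|Du|^\alpha\dif x\bigr)^{p/\alpha}$.
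It is this exponent gap ($p/\alpha>1$ on an $L^{p/\alpha}$-function) that yields $M[F(Du)\chi_\omega]\in L^1_\lo$, after which one concludes from $F(Du)\geq0$ via the maximal-function characterization of $L\log L$ (M\"uller's Lemma~3, i.e.\ Stein's lemma). Gehring's lemma enters only in the \emph{second} assertion of Theorem~\ref{thm:main}, where the two-sided hypothesis $F(Du)\geq c|Du|^p$ is available; under $F(v)\geq0$ alone, $F(v)\in L^{1+\varepsilon}_\lo$ is simply not true (already for $F=\det$ the $L\log L$ gain is sharp), so there is nothing for Gehring to improve, and $L\log L\not\hookrightarrow L^{1+\varepsilon}$. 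Your Caccioppoli inequality has no sub-$p$ power on the right-hand side and therefore cannot drive either Gehring or the maximal-function argument; you need to introduce the additional exponent $\alpha<p$, run H\"older with $\beta'(p-1)\leq\alpha$, and keep track of the $R$-dependence of the remainder estimate (the factor $R^{n/\beta-n/\alpha}$ in \eqref{eq:est}), which in turn forces a scale-by-scale (per-cube) decomposition rather than the single global splitting $v=\mathbb{P}v+w$. Without these ingredients the scheme does not close.
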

    This shows the generality of our observation. Much as the class of weakly convergent $A$-free sequences can be identified with the class of all $A$-quasiconvex functions \cite{FM99,kristensen2020introduction,kristensen2022oscillation}, so is $A$-quasiconcavity relevant to the self-improvement $F(v)\in L^1\implies F(v)\in L\log L$ if $Av=0$.

    The idea of proof is flexible enough so that the $L\log L$ improvement can also be obtained in examples where quasiconcavity is not known. A map $u\in W^{1,n}_\lo(\Omega,\R^n)$ that satisfies \eqref{eq:qr} for some $K\geq1$ is said to be \textit{$K$-quasiregular}. Let $p=2K/(K-1)$ and $n=2$, so $\Omega\subset\R^2$. The \textit{Burkholder functional} $B(A)=(K\det A-|A|^2)|A|^{p-2}$ was seen to be intimately connected with martingale inequalities, geometric function theory, and Morrey's problem.  An impressive partial result towards establishing the quasiconcavity of $B$ was recently established in \cite{AFGKK1}, where an extensive description of the relevance of the Burkholder functional is provided.
    \begin{theorem}\label{thm:Burkholder}
            Let $B\colon\R^{2\times 2}\to\R$ be the  integrand defined above with $p=\tfrac{2K}{K-1}$, $K>1$. 
 Then 
  $$  
  \begin{rcases}
        Du\in L^p_\lo(\Omega,\R^{2\times 2})\\
        B(Du)\geq 0\text{ a.e.}
    \end{rcases}\implies B(Du)\in L\log L_\lo(\Omega),
    $$
    Moreover, if $u_j\rightharpoonup u$ in $W^{1,p}_\lo(\Omega,\R^2)$ is a sequence of $K$-quasiregular maps, then $B(Du_j)$ is locally uniformly integrable.
    \end{theorem}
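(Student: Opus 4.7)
The plan is to follow the strategy of Theorem~\ref{thm:main} as closely as possible. First observe that the hypothesis $B(Du)\geq 0$ a.e.\ is pointwise equivalent to $K\det Du\geq |Du|^2$ a.e., so that $u$ is $K$-quasiregular. Applying Theorem~\ref{thm:main} directly then fails for the single reason that full quasiconcavity of $B$ is open, and one is forced to reconstruct the argument with a weaker ingredient in place of \eqref{eq:qc}.

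The next step is to isolate precisely which quasiconcavity-type inequality is actually used in the proof of Theorem~\ref{thm:main}. Typically one applies \eqref{eq:qc} with a test field built from $u$ on a cube $Q$ modulo its average gradient (a Lipschitz truncation or Vitali-type construction), so that the relevant test fields are periodic perturbations of affine maps whose local behaviour is inherited from $u$. Since $u$ is already $K$-quasiregular, these test fields should again be $K$-quasiregular, or close to it, placing them within the scope of the partial quasiconcavity result of \cite{AFGKK1}.

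Granted this identification, the remainder of the proof of Theorem~\ref{thm:main} should transcribe verbatim: a reverse-Jensen-type inequality $\fint_Q B(Du)\leq B(\fint_Q Du)\lesssim |(Du)_Q|^p$, combined with the $p$-homogeneity of $B$, yields a Caccioppoli-type control of $\fint_Q B(Du)$ by the $L^p$-oscillation of $Du$; this self-improves to the $L\log L$ conclusion through a Gehring-type reverse H\"older iteration, with a quantitative dependence on $\|Du\|_{L^p(\tilde\omega)}$ tracked through the iteration. For the second statement, a weakly convergent sequence $u_j\rightharpoonup u$ in $W^{1,p}_\lo$ of $K$-quasiregular maps is uniformly bounded in $L^p_\lo$, so the quantitative $L\log L$ estimate applies uniformly in $j$, and local equi-integrability of $B(Du_j)$ follows from the de la Vall\'ee Poussin criterion.

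I expect the main obstacle to lie precisely in the matching at step two: if the test fields produced by the proof of Theorem~\ref{thm:main} do not land in the class covered by \cite{AFGKK1}, one must rebuild the truncation so that it preserves (approximate) $K$-quasiregularity—perhaps via a quasiconformal cut-and-paste procedure, or by invoking Astala's higher integrability as an auxiliary ingredient to absorb the error terms introduced by a standard Lipschitz truncation.
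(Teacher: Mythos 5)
The paper does not prove Theorem~\ref{thm:Burkholder} in this note — it is announced for \cite{Raita2025} — so there is no internal proof against which to compare. Nevertheless, the proposal, measured against the mechanism of Theorem~\ref{thm:main}, contains a genuine gap which you yourself flag but do not close, and the flagged step is precisely the one that would fail if taken verbatim.

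First, a correction of fact: the proof of Theorem~\ref{thm:main} does not use a Lipschitz truncation or a Vitali-type construction. It cuts off with $\rho_R$ and then performs a Helmholtz decomposition $\rho_R Du = DU_R + \eta_R$ into a periodic gradient part $DU_R$ and a small solenoidal remainder $\eta_R$; quasiconcavity is applied only to the periodic field $DU_R$. This matters for your step two. The pointwise distortion bound $K\det Du\geq |Du|^2$ does survive multiplication by the scalar $\rho_R$ (one gets $K\det(\rho_RDu)=K\rho_R^2\det Du\geq\rho_R^2|Du|^2=|\rho_RDu|^2$), but $\rho_R Du$ is no longer a gradient, and the Helmholtz projection onto periodic gradients does \emph{not} commute with the nonlinear distortion inequality. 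There is no reason for $DU_R$ to satisfy $K\det DU_R\geq|DU_R|^2$: the constraint is lost the moment you project. So the hypothesis of the partial quasiconcavity result of \cite{AFGKK1} — which requires the competitor gradient to be $K$-quasiregular (or within the distortion class to which their local Burkholder inequality applies) — is simply not met by the test field that Theorem~\ref{thm:main}'s argument produces. This is not a small mismatch to be absorbed by an error term: without it, the crucial inequality $\fint_{Q_R}B(DU_R)\leq B(\fint_{Q_R}DU_R)$ has no source.

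The remedies you sketch in the final paragraph (a quasiconformal cut-and-paste replacing the linear Helmholtz projection, or invoking Astala's higher integrability to absorb errors) are where the actual work lies, and as written they are placeholders rather than a construction. In particular, constructing a $K$-quasiregular (or approximately quasiregular) periodic competitor that agrees with $Du$ on $Q_{R/2}$ and has controllably small error in $L^\beta(Q_R)$ is a nonlinear gluing problem and is not a routine modification of the Helmholtz step; this is the point that a proof must address concretely. One minor further inaccuracy: the $L\log L$ conclusion in Theorem~\ref{thm:main} is not obtained by a Gehring iteration but from the reverse H\"older estimate \eqref{eq:MF} fed into the maximal function argument of M\"uller \cite[Lem.~3]{muller1989surprising}; a Gehring iteration is only relevant for the $L^{p+\varepsilon}$ improvement under the stronger lower bound $F(Du)\geq c|Du|^p$. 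Your identification of $B(Du)\geq 0$ with $K$-quasiregularity (using $p=2K/(K-1)>2$ so $|A|^{p-2}>0$ for $A\neq 0$) and the reduction of the equi-integrability statement to the quantitative $L\log L$ bound via de la Vall\'ee Poussin are both correct and are the right framing; the missing idea is how to manufacture a quasiregular test field in place of the Helmholtz projection.
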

    This result clarifies a recent assertion from \cite[p.1616]{AFGKK2}. Since it is not known whether the Burkholder functional is quasiconcave on $\R^{2\times 2}$, Theorem~\ref{thm:Burkholder} does not follow from Theorem~\ref{thm:main}.
\begin{proof}[Proof of Theorem~\ref{thm:main}]
Let $\omega\Subset\tilde\omega\Subset\Omega$ be open sets
and  consider  $Q_{R/2}\coloneqq Q_{R/2}(x_0)\subset\omega$, the cube centered at $x_0$, be such that $R<2\dist(\partial\omega,\partial\tilde \omega)$. Moreover, consider cut-off functions $\rho_R\in C_c^\infty(Q_R,[0,1])$ such that $|D\rho_R|\leq c/R$, where $Q_R\coloneqq Q_R(x_0)$. Also consider exponents $1<\alpha<p<\beta$ such that $\alpha(\beta-1)\geq \beta(p-1)$ and $\beta^{-1}\geq\alpha^{-1}-n^{-1}$.
The latter inequality ensures that $L^{\alpha}(Q_R)\hookrightarrow W^{-1,\beta}(Q_R)$.

We perform the Helmholtz decomposition into periodic matrix fields
\begin{align*}
    \rho_R Du=DU_R+\eta_R,
\end{align*}
where the solenoidal field $\eta_R$ satisfies $\int_{Q_R}\eta_R\dif x=0$ and we have the estimates
\begin{align}\label{eq:est}
    \|DU_R\|_{L^\alpha(Q_R)}\leq c\|Du\|_{L^\alpha(Q_R)}\quad\text{and}\quad\|\eta_R\|_{L^\beta(Q_R)}\leq cR^{n/\beta-n/\alpha}\|Du\|_{L^\alpha(Q_R)}.
\end{align}
This decomposition follows, for instance, from \cite{FM99} with $\mathcal A=\curl$ and the Sobolev embedding applied to the second inequality. 

Using the homogeneity of $F$ and the positivity of $F(Du)$ and $\rho_R$, we can  estimate
    \begin{align*}
        \fint_{Q_{R/2}}F(Du)\dif x&\leq c\fint_{Q_{R}}F(\rho_RDu)\dif x\leq c\fint_{Q_R}F(DU_R)\dif x+c\fint_{Q_R}|F(\rho_RDu)-F(DU_R)|\dif x.
    \end{align*}    
    We next use the quasiconcavity of $F$ on the first term  and the local Lipschitz bound for $p$-homogeneous quasiconvex integrands 
    $$
    |F(z_1)-F(z_2)|\leq c|z_1-z_2|(|z_1|+|z_2|)^{p-1}\quad\text{for }z_1,\,z_2\in\R^{m\times n}
    $$
    to obtain
    \begin{align*}
        \fint_{Q_{R/2}}F(Du)\dif x\leq cF\left(\fint_{Q_R}DU_R\dif x\right)+c\fint_{Q_R}|\eta_R|(|\rho_RDu|+|DU_R|)^{p-1}\dif x.
    \end{align*}
    We  apply the homogeneity and continuity of $F$ together with the integral identity for $\eta_R$ to the first term and H\"older's inequality for $\beta$ and its conjugate $\beta'=\beta/(\beta-1)$ to the second to get
    \begin{align*}
          \fint_{Q_{R/2}}F(Du)\dif x&\leq c\left|\fint_{Q_R} \rho_RDu\dif x\right|^p\\&+c\left(\fint_{Q_R}|\eta_R|^\beta\dif x\right)^{1/\beta}\left(\fint_{Q_R}(|\rho_RDu|+|DU_R|)^{\beta'(p-1)}\dif x\right)^{1/\beta'}.
    \end{align*}
    We can apply the triangle inequality to the first term and Young's inequality with exponents $p$ and conjugate $p'$ to the second to obtain
    \begin{align*}
          \fint_{Q_{R/2}}F(Du)\dif x&\leq c\left(\fint_{Q_R} |Du|\dif x\right)^p+c\left(\fint_{Q_R}|\eta_R|^\beta\dif x\right)^{p/\beta}\\&+c\left(\fint_{Q_R}(|\rho_RDu|+|DU_R|)^{\beta'(p-1)}\dif x\right)^{p'/\beta'}.
    \end{align*}
    We control the second term using the second estimate in \eqref{eq:est} and use H\"older's inequality for the second since $\alpha\geq\beta'(p-1)$ to get
    \begin{align*}
          \fint_{Q_{R/2}}F(Du)\dif x&\leq c\left(\fint_{Q_R} |Du|\dif x\right)^p+c\left(\fint_{Q_R}|Du|^\alpha\dif x\right)^{p/\alpha}\\&+c\left(\fint_{Q_R}(|\rho_RDu|+|DU_R|)^{\alpha}\dif x\right)^{p/\alpha}.
    \end{align*}
    We next use the H\"older inequality again to absorb the first term in the second and split the third term as follows
    \begin{align*}
          \fint_{Q_{R/2}}F(Du)\dif x&\leq c\left(\fint_{Q_R}|Du|^\alpha\dif x\right)^{p/\alpha}+c\left(\fint_{Q_R}|\rho_RDu|^{\alpha}\dif x\right)^{p/\alpha}+c\left(\fint_{Q_R}|DU_R|^{\alpha}\dif x\right)^{p/\alpha}.
    \end{align*}
    Now both the second and third terms can be absorbed in the first, by using boundedness of $\rho_R$ and the first estimate in \eqref{eq:est}, leading to
    \begin{align}\label{eq:MF}
          \fint_{Q_{R/2}}F(Du)\dif x&\leq c\left(\fint_{Q_R}|Du|^\alpha\dif x\right)^{p/\alpha}
    \end{align}
  for all cubes $Q_{R/2}=Q_{R/2}(x_0)\subset\omega$ with $R<2\dist(\partial\omega,\partial\tilde\omega)$, giving the the crucial estimate for  our proof. From here we can conclude as in \cite{muller1989surprising}. If $R$ is larger, we have the trivial estimate 
  \begin{align}\label{eq:bigR}
  \fint_{Q_{R/2}}F(Du)\dif x\leq c(\omega,\tilde\omega) \|Du\|_{L^p(\omega)}^p\quad\text{for }R\geq 2\dist(\partial\omega,\partial\tilde\omega).
  \end{align}
   Taking supremum over $R<2\dist(\partial\omega,\partial\tilde\omega)$ in \eqref{eq:MF} and writing $Mf$ for the maximal operator of a locally integrable function $f$, we obtain that 
    \begin{align*}
        \sup_{0<R<2\dist(\partial\omega,\partial\tilde\omega)}\fint_{Q_{R/2}}F(Du)\dif x\leq c\left((M[|Du|^{\alpha}\chi_{\tilde\omega}])^{p/\alpha}\right),
    \end{align*}
    which together with \eqref{eq:bigR} and the positivity of $F(Du)$ gives
    \begin{align*}
        M[F(Du)\chi_\omega](x_0)\leq c(\omega,\tilde\omega)\left((M[|Du|^{\alpha}\chi_{\tilde\omega}](x_0))^{p/\alpha}+\|Du\|_{L^p(\omega)}^p\right).
    \end{align*}
    We integrate this to obtain
    \begin{align*}
       \int_{\tilde\omega}M[F(Du)\chi_\omega]\dif x_0\leq c(\omega,\tilde\omega)\left(\int_{\tilde\omega}(M[|Du|^{\alpha}\chi_{\tilde\omega}])^{p/\alpha}\dif x_0+\|Du\|_{L^p(\omega)}^p\right).
    \end{align*}
     By boundedness of the maximal function on $L^p(\R^n)$, we infer
    \begin{align*}
        \int_{\tilde\omega}M[F(Du)\chi_\omega]\dif x_0\leq c(\omega,\tilde\omega)\int_{\tilde\omega}|Du|^{p}\dif x_0,
    \end{align*}
   from which the proof can be concluded using $F(Du)\geq0$ and \cite[Lem.~3]{muller1989surprising}.
\end{proof}

\bibliographystyle{amsplain}
\bibliography{ref}
\end{document}